
\documentclass[12pt]{amsart}%
\usepackage{eurosym}
\usepackage{amssymb}
\usepackage{amsmath}
\usepackage{amsfonts}
\usepackage{graphicx}%
\setcounter{MaxMatrixCols}{30}

\newtheorem{theorem}{Theorem}[section]
\newtheorem{lemma}[theorem]{Lemma}

\setlength{\topmargin}{0cm} \setlength{\oddsidemargin}{0.5cm}
\setlength{\evensidemargin}{0.5cm} \pagestyle{plain}
\textwidth=15cm \textheight=23cm
\newtheoremstyle{notauto}{}{}{\itshape}{}{\bfseries}{.}{0.5em}{\thmnote{#3}}
\theoremstyle{notauto}

\theoremstyle{definition}
\newtheorem{definition}[theorem]{Definition}

\theoremstyle{remark}

\renewcommand{\geq}{\geqslant}
\renewcommand{\leq}{\leqslant}

\begin{document}
\title{Frobenius groups of automorphisms\\with almost fixed point free kernel}
\author{G\"{u}l\.{I}n Ercan$^{*}$}
\address{G\"{u}l\.{I}n Ercan, Department of Mathematics, Middle East Technical
University, Ankara, Turkey}
\email{ercan@metu.edu.tr}
\author{\.{I}sma\.{I}l \c{S}. G\"{u}lo\u{g}lu}
\address{\.{I}sma\.{I}l \c{S}. G\"{u}lo\u{g}lu, Department of Mathematics,
Do\u{g}u\c{s} University, Istanbul, Turkey}
\email{iguloglu@dogus.edu.tr}
\thanks{$^{*}$Corresponding author}
\thanks{This work has been supported by the Research Project T\" UB\. ITAK 114F223.}
\subjclass[2000]{20D10, 20D15, 20D45}
\keywords{solvable group, automorphism, Fitting length, Frobenius group}
\maketitle

\begin{abstract}
Let $FH$ be a Frobenius group with kernel $F$ and complement $H$, acting
coprimely on the finite solvable group $G$ by automorphisms. We prove that if
$C_{G}(H)$ is of Fitting length $n$ then the index of the $n$-th Fitting
subgroup $F_{n}(G)$ in $G$ is bounded in terms of $|C_{G}(F)|$ and $|F|.$ This
generalizes a result of Khukhro and Makarenko \cite{k-m} which handles the
case $n=1.$

\end{abstract}

\section{introduction}

All groups throughout this paper are finite, notation and terminology are
standard except as indicated. Let a fixed group $A$ act on the group $G$ by
automorphisms. It is well known that the structure of $A$ and the way it acts
on the group $G$ has great influence on the structure of $G$. For example, one
of the famous theorems due to Thompson says that $G$ must be nilpotent if $A$
is of prime order and acts fixed point freely on $G$. This result has played a
motivating and stimulating role in studying the structure of a group admitting
a group of automorphisms with a prescribed action. There have been a lot of
research in this direction some part of which culminated in the work of
Turull. We want to state below some of his results (\cite{Tur2}, \cite{Tur3})
in order to indicate the great development in this direction after
Thompson:\newline

\textit{Let $G$ be a solvable group and $f(G)$ denote the Fitting length of
$G$.}

$(i)$\textit{If a solvable group $A$ acts coprimely on $G$, then $f(G)\leq
f(C_{G}(A))+2\ell(A)$;}

$(ii)$\textit{If $A$ acts coprimely with regular orbits on $G$, then
$f(G)\leq\ell(A)+\ell(C_{G}(A))$, and the index $|G: F_{\ell(A)}(G)|$ is
bounded in terms of $|C_{G}(A)|$ and $|A|$, where $\ell(A)$ is the length of
the longest chain of subgroups of $A.$}\newline

In \cite{k} and \cite{k1} Khukhro studied the case where $A=FH$ is a Frobenius
group with kernel $F$ and complement $H$, and obtained very precise results
when $C_{G}(F)=1$. Namely he proved that $F_{k}(G)\cap C_{G}(H)=F_{k}%
(C_{G}(H))$ for any k, in particular $f(G)=f(C_{G}(H))$. It is worth
mentioning here the results of \cite{KhMaSh},  \cite{MaSh}, \cite{KhMaSh1}
related to the action of a Frobenius group of automorphisms with fixed point
free kernel. Although it seems to be essential to assume that $C_{G}(F)=1$ in
all these results, Khukhro and Makarenko have also considered the action of a
Frobenius group $A=FH$ with almost fixed point free kernel when $C_{G}(H)$ is
nilpotent (see \cite{k-m}, Theorem 2.1). They proved namely: \newline

\textit{If a Frobenius group $FH$ with kernel $F$ and complement $H$ acts
coprimely on the finite solvable group $G$ in such a way that $C_{G}(H)$ is
nilpotent then the index of the Fitting subgroup is bounded in terms of
$|C_{G}(F)|$ and $|F|$. }\newline

The main result of the present paper gives a new proof of the above result and extends it to the case where $C_{G}(H)$ is of arbitrary Fitting
length, that is, the group $G$ has Fitting length at most $f(C_{G}(H))$ except
for some quotient group of $G$ bounded in terms of $|C_{G}(F)|$ and $|F|$, and hereby answers a question posed by Khukhro.
Namely we obtain\newline

\textbf{Theorem} \textit{Let $A=FH$ be a Frobenius group with kernel $F$ and
complement $H$ and let $m\in\mathbb{N}$. Then there exists a function
$g: \mathbb{N}\rightarrow\mathbb{N}$ which may depend on the parameters $|F|$ and $m$, but is independent of $H$, such that for any finite solvable
group $G$ on which $A$ acts coprimely by automorphisms with $|C_{G}(F)|\leq m$
we have $|G:F_{n}(G)|\leq g(n)$ where $n=f(C_{G}(H)).$}\newline

\section{Preliminaries}

\begin{lemma}
[\cite{Tur3}, Lemma 2.4]Let $G$ be a solvable group and suppose that $N_{i},
i=1,\ldots,r$ be normal subgroups of $G$ such that $\bigcap_{i=1}^{r}
N_{i}=1.$ Let $n\in\mathbb{N}$. Then
\[
|G:F_{n}(G)|\leq\prod_{i=1}^{r}|G/N_{i}/F_{n}(G/N_{i})|.
\]

\end{lemma}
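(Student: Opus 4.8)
The plan is to exploit the fact that the condition $\bigcap_{i=1}^{r} N_i = 1$ realizes $G$ as a subdirect product of the quotients $G/N_i$, and then to reduce everything to two monotonicity properties of the Fitting series. For each $i$, let $K_i$ denote the full preimage in $G$ of $F_n(G/N_i)$, so that $K_i \trianglelefteq G$, $K_i \supseteq N_i$, and $|G:K_i| = |(G/N_i)/F_n(G/N_i)|$. Set $L = \bigcap_{i=1}^{r} K_i$. Since $G/L$ embeds into $\prod_{i=1}^{r} G/K_i$, we get immediately
\[
|G:L| \leq \prod_{i=1}^{r} |G:K_i| = \prod_{i=1}^{r} |(G/N_i)/F_n(G/N_i)|,
\]
so it suffices to prove the key containment $L \subseteq F_n(G)$; granting this, $|G:F_n(G)| \leq |G:L|$ and the lemma follows.

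The crux of the argument is therefore to show $L \subseteq F_n(G)$, and this is where I expect the real work to lie. First I would record the image of $L$ in each quotient: $LN_i/N_i \subseteq K_i/N_i = F_n(G/N_i)$, so each $LN_i/N_i$ has Fitting length at most $n$. Because $\bigcap_i (L \cap N_i) \subseteq \bigcap_i N_i = 1$, the subgroup $L$ embeds as a subdirect product into $\prod_{i=1}^{r} L/(L\cap N_i) \cong \prod_{i=1}^{r} LN_i/N_i$. The direct product on the right has Fitting length at most $n$, since the Fitting series of a direct product is the product of the Fitting series. Hence I need the first monotonicity fact: Fitting length does not increase on passing to a subgroup, which follows from $F_k(X) \cap Y \subseteq F_k(Y)$ for $Y \leq X$ (proved by induction on $k$, the base case being that $F(X)\cap Y$ is a nilpotent normal subgroup of $Y$). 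Applying this to $L \leq \prod_i LN_i/N_i$ yields $f(L) \leq n$.

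It remains to convert the intrinsic bound $f(L) \leq n$ into the containment $L \subseteq F_n(G)$, using that $L \trianglelefteq G$. For this I would prove the second, and more delicate, fact by induction on $n$: any normal subgroup $L$ of $G$ with $f(L) \leq n$ satisfies $L \subseteq F_n(G)$. The base case $n=1$ is the standard statement that a nilpotent normal subgroup lies in $F(G)$. For the inductive step, note that $F(L)$ is characteristic in $L$, hence normal and nilpotent in $G$, so $F(L) \subseteq F(G)$; passing to $\bar G = G/F(G)$, the image $\bar L = LF(G)/F(G) \cong L/(L\cap F(G))$ is a quotient of $L/F(L)$ and so has Fitting length at most $n-1$, while $\bar L \trianglelefteq \bar G$. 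By induction $\bar L \subseteq F_{n-1}(\bar G) = F_n(G)/F(G)$, whence $L \subseteq F_n(G)$.

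Combining the two facts gives $L \subseteq F_n(G)$ and completes the argument. The main obstacle is precisely this last passage: moving from a subgroup-theoretic bound on the Fitting length of $L$ to a statement about the position of $L$ in the Fitting series of the ambient group $G$. Everything else — the subdirect embedding, the preimage subgroups $K_i$, and the order estimates — is routine bookkeeping once these two monotonicity principles are in hand.
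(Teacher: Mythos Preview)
Your argument is correct. The paper itself does not prove this lemma; it simply quotes it as \cite[Lemma~2.4]{Tur3} and uses it as a black box, so there is no ``paper's own proof'' to compare against. Your proof is the standard one: define $K_i$ as the preimage of $F_n(G/N_i)$, set $L=\bigcap_i K_i$, bound $|G:L|$ via the subdirect embedding $G/L\hookrightarrow\prod_i G/K_i$, and then show $L\le F_n(G)$. The two monotonicity facts you isolate --- that Fitting length is subgroup-closed, and that a normal subgroup of Fitting length at most $n$ lies inside $F_n(G)$ --- are exactly what is needed, and your inductive proofs of both are clean. One minor simplification: you can skip the intermediate embedding of $L$ into $\prod_i LN_i/N_i$ and argue directly that $L\trianglelefteq G$ with $L\cap N_i\trianglelefteq L$, $\bigcap_i(L\cap N_i)=1$, and each $L/(L\cap N_i)\cong LN_i/N_i\le F_n(G/N_i)$ has Fitting length $\le n$; then $F_n(L)\supseteq\bigcap_i(L\cap N_i)$-preimage arguments already give $f(L)\le n$ without ever leaving $L$. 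But this is cosmetic; your version is fine as written.
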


\begin{lemma}
[\cite{k}, Lemma 1.3]Let $FH$ ve a Frobenius group with kenel $F$ and
complement $H$. Suppose that $V$ is a vector space over an arbitrary field on
which $FH$ acts by linear transformations. If $[V,F]\ne0$ then $C_{V}(H)\ne0.$
\end{lemma}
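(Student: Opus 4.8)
The statement concerns only the linear representation theory of $FH$, so fix a field $k$ and a $k[FH]$-module $V$ with $[V,F]\neq 0$; the plan is to produce a non-zero vector fixed by $H$. I would argue by induction on $|FH|$ after first reducing the kernel. If $F$ has a proper non-trivial $H$-invariant normal subgroup $F_0$ — and it does, unless $F$ is already abelian and $H$-irreducible, since $Z(F)$ is characteristic and, when $F$ is abelian, a minimal non-trivial $H$-invariant subgroup serves — then $F_0\trianglelefteq FH$, and both $F_0H$ and $FH/F_0\cong(F/F_0)H$ are Frobenius groups with strictly smaller kernels. Now either $[V,F_0]\neq 0$, and induction applied to $V$ as an $F_0H$-module gives $C_V(H)\neq 0$; or $F_0$ acts trivially on $V$, so $V$ is a $(F/F_0)H$-module with $[V,F/F_0]=[V,F]\neq 0$, and induction applied to $(F/F_0)H$ gives $C_V(H)\neq 0$. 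Thus we may assume $F$ is an elementary abelian $q$-group on which $H$ acts irreducibly and (by the Frobenius hypothesis) fixed-point-freely.

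Next I would treat the case $q\neq\operatorname{char}k$. Here $k[F]$ is semisimple, the subspaces $C_V(F)$ and $[V,F]$ are $k[FH]$-submodules with $V=C_V(F)\oplus[V,F]$ (each is an $F$-submodule and is $H$-invariant because $H$ normalises $F$), so we may replace $V$ by $[V,F]$ and assume $C_V(F)=0$. Decompose $V|_F$ into its $F$-isotypic components; $H$ permutes them, and the crucial input is that a non-identity element of $H$ fixes no non-trivial conjugacy class of $F$ — hence, via Brauer's permutation lemma, no non-trivial irreducible $\bar k[F]$-module — so that, passing to Galois orbits over $k$ if necessary, $H$ acts \emph{freely} on the non-trivial irreducible $F$-modules. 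Therefore each $H$-orbit of isotypic components has length $|H|$ and $H$ permutes it regularly, whence $V|_H$ is a direct sum of copies of the regular representation $k[H]$; since $k[H]$ has a non-zero $H$-fixed space over any field, $C_V(H)\neq 0$. (The modular possibility $\operatorname{char}k=p$ with $p\mid|H|$ forces $p\nmid|F|$, and exactly this argument applied to the $k[FH]$-summand $[V,F]$ still works.)

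The remaining case is $q=\operatorname{char}k$, which I expect to be the real obstacle: neither Maschke's theorem nor the clean Clifford/induction bookkeeping is available, and one must work directly with the structure of $V$ over the local ring $k[F]$. Here $F$ acts unipotently, $V^{F}$ is a non-zero $H$-invariant subspace on which $F$ acts trivially, and the aim is to show — exploiting the free action of $H$ on $F$ and the way $H$ acts on the socles of the indecomposable $k[F]$-direct summands of $V$ (using that $H$ acts on $F$, hence on these one-dimensional socles, by scalars lying in $\F_q^{*}$) — that $V^{F}$, or a suitable $H$-submodule manufactured from it, contains a non-zero $H$-fixed vector. This is where the genuine wildness of the representation theory of an elementary abelian $p$-group in characteristic $p$ enters, and where one really needs freeness of the $H$-action rather than merely its fixed-point-freeness; once this case is in hand, combined with the two cases above, the reduction of the first paragraph completes the proof.
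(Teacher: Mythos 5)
Your first two paragraphs are sound and essentially reproduce the standard argument for this lemma: the reduction to an elementary abelian kernel on which $H$ acts irreducibly is valid (each $F_0H$ and $(F/F_0)H$ is again Frobenius), and in the case $\operatorname{char}k\neq q$ the free action of $H$ on the nontrivial characters of $F$ (equivalently, $[F,h]=F$ for $h\neq 1$) does make $[V,F]\!\downarrow_H$ a free $kH$-module. One small repair: rather than juggling Galois orbits over $k$ --- where $H$ could stabilize an orbit setwise without fixing any character in it --- extend scalars to $\bar k$ at the outset, which is harmless since $C_{V\otimes\bar k}(H)=C_V(H)\otimes\bar k$ and $[V\otimes\bar k,F]=[V,F]\otimes\bar k$. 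The paper itself gives no proof (it only cites Khukhro), so there is nothing to compare your third paragraph against; but your instinct that $\operatorname{char}k=q$ is ``the real obstacle'' is correct in the strongest possible sense: \emph{the statement as quoted is false in that case}, so no argument of the kind you sketch can exist. Take $FH$ to be the Frobenius group of order $20$ with $F=\langle a\rangle\cong C_5$, $H=\langle h\rangle\cong C_4$ and $hah^{-1}=a^2$, and let $V=\mathbb{F}_5^{\,2}$ with
\[
a\mapsto A=\begin{pmatrix}1&1\\0&1\end{pmatrix},\qquad h\mapsto P=\begin{pmatrix}4&0\\0&2\end{pmatrix}.
\]
Then $PAP^{-1}=A^{2}$ and $P^{4}=I$, so this is a genuine $FH$-module; here $[V,F]=\langle e_1\rangle\neq 0$, yet $P$ has eigenvalues $4$ and $2$, so $C_V(H)=0$.

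The trouble is that the lemma has been transcribed with the wrong hypothesis. Khukhro's Lemma 1.3 assumes $C_V(F)=0$ and $V\neq 0$. When $\operatorname{char}k\nmid|F|$ one has $V=C_V(F)\oplus[V,F]$ and the two hypotheses are interchangeable, but in the modular case $[V,F]\neq 0$ is strictly weaker, as the example shows (there $C_V(F)=\langle e_1\rangle\neq 0$). Under the correct hypothesis the modular case is easy and requires none of the wild representation theory you anticipate: if $q=\operatorname{char}k$ divides $|F|$, let $Q$ be the normal Sylow $q$-subgroup of the nilpotent group $F$ and $Q'$ its Hall $q'$-complement; then $C_V(Q)\neq 0$ (unipotent action), it is $Q'H$-invariant, and $C_{C_V(Q)}(Q')=C_V(F)=0$ forces $Q'\neq 1$, so your semisimple argument applied to the Frobenius group $Q'H$ acting on $C_V(Q)$ finishes the proof. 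Since this paper only invokes the lemma for sections of $G$ on which $A$ acts coprimely, where the two formulations coincide, the main theorem is unaffected; but as a freestanding statement you should either prove the $C_V(F)=0$ version or add the hypothesis $\operatorname{char}k\nmid|F|$.
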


\begin{lemma}
[\cite{EG}, Proposition 4.1]Let $FH$ be a Frobenius group with kernel $F$ and
complement $H$ acting on a $q$-group $Q$ for some prime $q$ coprime to the
order of $FH$. Let $V$ be a $kQFH$-module where $k$ is a field of
characteristic not dividing $|QFH| $. If $C_{V}(F)=1$ then
\[
Ker(C_{Q}(H) \,{on}\, C_{V}(H))=Ker(C_{Q}(H) \,{on}\, V).
\]

\end{lemma}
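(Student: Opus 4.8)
The plan is to prove the nontrivial inclusion $\mathrm{Ker}(C_Q(H)\text{ on }C_V(H))\subseteq\mathrm{Ker}(C_Q(H)\text{ on }V)$, the reverse being immediate since anything acting trivially on $V$ acts trivially on the subspace $C_V(H)$. So fix $g\in C_Q(H)$ with $[C_V(H),g]=0$ and aim to show $[V,g]=0$. Because $g$ is centralized by $H$, the subgroup $R:=\langle g^{FH}\rangle$ is an $FH$-invariant $q$-subgroup of $Q$; and since a conjugate of an element acting trivially on $V$ again acts trivially, one has $[V,g]=0$ if and only if $[V,R]=0$. Thus it suffices to show $[V,R]=0$.

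First I would record the two consequences of $C_V(F)=1$ that drive everything. As $\mathrm{char}\,k\nmid|F|$, Maschke's theorem gives $V=\sum_{f\in F}f\,C_V(H)$: letting $V_1$ be the $FH$-submodule generated by $C_V(H)$ and $V_2$ an $FH$-complement, one has $C_{V_2}(H)=0$, whence $[V_2,F]=0$ by the contrapositive of the Frobenius fixed-point lemma (\cite{k}, Lemma 1.3, stated above), so that $V_2\le C_V(F)=1$ and $V=V_1$. Dually, since the trivial $F$-module does not occur in $V$, the averaging idempotent $e_H=|H|^{-1}\sum_{h\in H}h$ satisfies $\sum_{f\in F}f^{-1}e_Hf=\tfrac{|F|}{|H|}\,\mathrm{id}_V$ on $V$ (the non-identity terms assemble into $\sum_{x\in FH}x-\sum_{x\in F}x$, which vanishes on $V$); taking traces via Brauer characters yields the count $\dim C_V(H)=\dim V/|H|$. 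These are the devices that transfer information from $C_V(H)$ to $V$.

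The heart of the argument is an induction on $\dim_k V$. Since $R$ is $FH$-invariant and coprime to $\mathrm{char}\,k$, there is an $FH$-invariant decomposition $V=C_V(R)\oplus[V,R]$. If $C_V(R)\ne0$, then $[V,R]$ is a proper $FH$-submodule with $C_{[V,R]}(F)=0$ and $C_{[V,R]}(H)\subseteq C_V(H)$, so applying the inductive hypothesis to $R$ acting on $[V,R]$ shows that $g$ centralizes $[V,R]$; as $g\in R$ automatically centralizes $C_V(R)$, it centralizes $V$ and we are done. Hence everything reduces to the crux case $C_V(R)=0$, that is $V=[V,R]$, in which $R=\langle g^{FH}\rangle$ is fixed-point-free on $V$ while its $H$-fixed generator $g$ centralizes $C_V(H)$; here one may further pass to a faithful irreducible $kRFH$-constituent.

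The main obstacle is precisely this crux case: one must show that a fixed-point-free $R=\langle g^{FH}\rangle$ cannot have the generator $g$ centralize $C_V(H)$ unless $V=0$. Here I would invoke Clifford theory. Since $R\trianglelefteq RFH$, the group $FH$ permutes the $R$-homogeneous components of $V$ transitively, and because $g\in C_Q(H)$ its action is equivariant for the $H$-permutation of these components, so the set of components moved by $g$ is a union of $H$-orbits while the sets moved by the conjugates $g^{f}$ are its $F$-translates. Feeding in $C_V(R)=\bigcap_{f\in F}f\,C_V(g)=0$ together with $V=\sum_{f\in F}f\,C_V(H)\subseteq\sum_{f\in F}f\,C_V(g)$ and the count $\dim C_V(H)=\dim V/|H|$, and using (\cite{k}, Lemma 1.3) to force every nonzero $FH$-submodule to meet $C_V(H)$, one contradicts the fixed-point-freeness of $R$. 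Controlling this interaction between the Frobenius permutation action on the homogeneous components and the single hypothesis $[C_V(H),g]=0$ is the delicate point; the Frobenius lemma and the quantitative fixed-point count are what make it close.
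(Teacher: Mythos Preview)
The paper does not give its own proof of this lemma: it is quoted from \cite{EG}, Proposition~4.1, with no argument supplied here. So there is nothing in the present paper to compare against, and I assess your proposal on its own terms.

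Your reductions are sound. Passing from $g$ to the $FH$-closure $R=\langle g^{F}\rangle$, the identity $V=\sum_{f\in F} f\,C_V(H)$, the count $\dim C_V(H)=\dim V/|H|$ (equivalently, $V$ is free as a $kH$-module since every irreducible $kFH$-module on which $F$ acts nontrivially is induced from $F$), and the inductive splitting $V=C_V(R)\oplus[V,R]$ are all correct, and they do funnel the problem into your ``crux case'': $V$ an irreducible faithful $kRFH$-module with $C_V(R)=0$, and $g\in C_R(H)$ centralizing $C_V(H)$.

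The genuine gap is that the crux case is not resolved. Your final paragraph assembles ingredients --- the $H$-invariance of the set $S$ of $R$-homogeneous components on which $g$ acts nontrivially, the relations $\bigcap_{f} fC_V(g)=0$ and $\sum_{f} fC_V(g)=V$, the dimension count, and Lemma~2.2 --- and then asserts that ``one contradicts the fixed-point-freeness of $R$''. But no contradiction is actually derived, and these ingredients do not combine by themselves. Having $|F|$ subspaces of dimension at least $\dim V/|H|$ with sum $V$ and trivial intersection is perfectly consistent; and knowing that $S$ is a union of $H$-orbits whose $F$-translates cover $\Omega$ does not, on its own, force $S=\varnothing$. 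Your closing sentence concedes that this is ``the delicate point'', and that is precisely right: it is where the entire content of the proposition resides, and your proposal offers no mechanism that closes it. The argument in \cite{EG} handles this by a considerably more detailed analysis than the heuristic you outline, so as it stands your write-up is a correct reduction to the hard case together with an unproved assertion about that case.
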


\begin{definition}
Let $G$ be a group. We call a sequence $P_{i}=C_{i}/D_{i}, i=1,\ldots,\ell$ of
sections of $G$ an $\mathcal{F}$-chain in $G$ if the following are
satisfied:\newline

$(a)\,\, P_{i}$ is a $p_{i}$-group for a single prime $p_{i}$ for
$i=1,\ldots,\ell$;\newline

$(b)\,\, p_{i}\ne p_{i+1}$ for $i=1,\ldots,\ell-1$;\newline

$(c)\, D_{\ell}=1$ and $D_{i}\leq C_{C_{i}}(P_{i+1})$ for
$i=1,\ldots,\ell-1$;\newline

$(d)\, [P_{i},P_{i+1}]=P_{i+1}$ for
$i=1,\ldots,\ell-1$.
\end{definition}

\begin{lemma}
In a solvable group $G$, the Fitting length $f(G)$ of $G$ is equal to the
maximum of the set $\{\ell: \ell\,\, \text{is the length of an }%
\mathcal{F}\text{-chain in}\,G \}$.
\end{lemma}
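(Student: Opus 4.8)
The plan is to prove the two inequalities between $f(G)$ and $\mu(G):=\max\{\ell:\ell\text{ is the length of an }\mathcal F\text{-chain in }G\}$ separately, each by induction. Two standard facts about a solvable group $X$ will be used throughout: every section of $X$ has Fitting length at most $f(X)$ (so it is harmless to pass between $G$, its subgroups and its quotients), and $C_X\bigl(F(X)\bigr)\le F(X)$, which gives more generally that $X/F_{k-1}(X)$ acts faithfully on $F_k(X)/F_{k-1}(X)$ for every $k$.

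For $\mu(G)\le f(G)$, let $P_i=C_i/D_i$, $i=1,\dots,\ell$, be an $\mathcal F$-chain in $G$ (with, as always, all the $P_i$ nontrivial); it suffices to show $f(C_1)\ge\ell$, by induction on $\ell$. When $\ell=1$ this is clear since then $D_1=1$ and $C_1=P_1\neq1$. For $\ell\ge2$, the truncated sequence $P_2,\dots,P_\ell$ is an $\mathcal F$-chain of length $\ell-1$ whose ambient group is $C_2$, so $f(C_2)\ge\ell-1$ by induction, and it remains to prove $f(C_1)\ge f(C_2)+1$. This is where conditions (b)--(d) enter: (c) with $i=1$ gives $[D_1,C_2]\le D_2$, (d) with $i=1$ gives $C_2=[C_1,C_2]D_2$, and (a)--(b) say that $C_1/D_1$ is a $p_1$-group while $C_2/D_2$ is a $p_2$-group with $p_1\neq p_2$. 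The point is that the nilpotent $p_1$-section on top of $C_1$ cannot be absorbed into the top Fitting factor of the normal subgroup $C_2$, because on a $p_2$-section $P_1$ acts coprimely with $[P_2,P_1]=P_2$, and a $p$-group cannot act on a nontrivial $p$-group $Q$ with $[Q,P]=Q$. I would make this precise by assuming for contradiction that $f(C_1)=f(C_2)=\ell-1$ and analysing the position of the top $p_1$-layer relative to the Fitting series of $C_1$: passing to the nilpotent group $\overline{C_1}=C_1/F_{\ell-2}(C_1)$, one exploits the decomposition $\overline{C_1}=O_{p_2}(\overline{C_1})\times O_{p_2'}(\overline{C_1})$ together with $\overline{C_2}=[\overline{C_1},\overline{C_2}]\,\overline{D_2}$ and the above fact about $p$-groups to reach a contradiction (the case where the $p_1$-layer degenerates modulo $F_{\ell-2}(C_1)$ being disposed of separately, since there $C_2$ is forced too low in the Fitting series of $C_1$). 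This yields $f(G)\ge f(C_1)\ge\ell$.

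For $f(G)\le\mu(G)$, I would induct on $n=f(G)$, the cases $n\le1$ being immediate. For $n\ge2$ put $F=F_{n-1}(G)$; then $G/F$ is nilpotent and nontrivial, and by the faithfulness fact it acts faithfully on the nilpotent group $V:=F_{n-1}(G)/F_{n-2}(G)$. Applying the inductive hypothesis to $F_{n-1}(G)$, whose Fitting length is $n-1$, produces an $\mathcal F$-chain $Q_1,\dots,Q_{n-1}$ in $F_{n-1}(G)$, which (by replacing each section by a suitable characteristic section) may be taken with each $Q_i$ characteristic in $F_{n-1}(G)$, hence normal in $G$. The plan is then to prepend one more section on top, coming from $G/F$: choose a prime $p$ with $O_p(G/F)\neq1$ acting nontrivially on $Q_1$ (ensuring also $p\neq p_1$, the prime of $Q_1$), replace $Q_1$ by $[Q_1,O_p(G/F)]$ if necessary so that condition (d) holds at the new top, and let $P$ be the quotient of the full preimage of $O_p(G/F)$ in $G$ by the kernel of its conjugation action on the (new) $Q_1$. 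Then $P,Q_1,\dots,Q_{n-1}$ is an $\mathcal F$-chain of length $n$: (a) is clear, (b) holds by the choice $p\neq p_1$, and (c), (d) come from the construction.

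The main obstacle is the choice of the prime $p$ in this last step, together with checking that after the replacement $Q_1\mapsto[Q_1,O_p(G/F)]$ the truncated sequence is still an $\mathcal F$-chain in $F_{n-1}(G)$. The difficulty is genuine: a $p$-group can act faithfully on a $p$-group, so it is not automatic that some Sylow subgroup of the nilpotent group $G/F$ acts nontrivially on a section of $V$ of a prime different from its own — and if no such prime existed, that layer could be pushed down into the Fitting subgroup of $G$, so that the Fitting length would drop and a unit of length would be lost. Ruling this out is the heart of the matter; I would do it by invoking the faithfulness of $G/F_{k-1}(G)$ on $F_k(G)/F_{k-1}(G)$ at the level where the prime in question first occurs, together with a minimality argument on $|G|$. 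The analogous subtle point in the first half of the argument is the non-absorption of the top $p_1$-layer into $C_2$. Lemmas 2.1--2.3 play no role here; Lemma 2.5 is purely a statement about the structure of solvable groups.
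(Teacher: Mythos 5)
Your first half (the inequality ``length of an $\mathcal F$-chain $\le f(G)$'') rests on a reading of Definition 2.4 that is not there: an $\mathcal F$-chain is not nested. The definition only needs, implicitly, that $C_i$ normalizes $C_{i+1}$ and $D_{i+1}$ so that (c) and (d) make sense; it does not require $C_{i+1}\le C_i$, and the paper's own application really uses non-nested chains (in the proof of the Theorem the appended section $P_{n+1}=[C_M(H),P_n]$ lies inside a module $M\le F(G)$, not inside $C_n$, which is a $p_n$-group for a different prime). Consequently your reduction ``it suffices to show $f(C_1)\ge\ell$'' is a reduction to a false statement: in $G=S_3$ take $C_1=\langle(12)\rangle$, $D_1=1$, $C_2=A_3$, $D_2=1$; conditions (a)--(d) all hold, so this is an $\mathcal F$-chain of length $2$, yet $f(C_1)=1$. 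For the same reason ``the truncated sequence $P_2,\dots,P_\ell$ is an $\mathcal F$-chain whose ambient group is $C_2$'' is unjustified, since nothing forces $C_3\le C_2$. The paper proves this direction quite differently, by induction on $|G|$: if some $O_p(G)\ne1$ with $p\ne p_m$, pass to $\bar G=G/O_p(G)$, where $\bar P_m\ne1$ and condition (d) propagates nontriviality up the image chain, so $m\le f(\bar G)\le f(G)$; otherwise $F(G)=O_{p_m}(G)$ and one passes to $\tilde G=G/F(G)$, where the images of $P_1,\dots,P_{m-1}$ form an $\mathcal F$-chain of length $m-1$, giving $m-1\le f(\tilde G)=f(G)-1$. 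Any repair of your argument has to work for non-nested chains, so it essentially has to take this quotient route rather than locating the whole chain inside $C_1$.

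Your second half ($f(G)\le$ maximal chain length) is a plan rather than a proof, and the step you yourself call ``the heart of the matter'' is precisely what is missing. After obtaining by induction an $\mathcal F$-chain $Q_1,\dots,Q_{n-1}$ in $F_{n-1}(G)$, there is no reason that the chain can be rechosen with each $Q_i$ characteristic in $F_{n-1}(G)$ (the inductive construction has no canonicity), no reason that some Sylow subgroup of the nilpotent group $G/F_{n-1}(G)$ acts nontrivially on $Q_1$ through a prime different from $p_1$, and shrinking $Q_1$ to $[Q_1,O_p(G/F)]$ may destroy condition (d) between $Q_1$ and $Q_2$ -- you flag these points but do not resolve them. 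What is being asked for here is in substance the existence of a tower of length $f(G)$ in the sense of Dade/Turull, and the paper does not reprove it: it simply quotes the existence of an irreducible tower $\hat P_1,\dots,\hat P_f$ from \cite{Tur2} and converts it into an $\mathcal F$-chain by replacing $\hat P_i$ with $\hat P_i/C_{\hat P_i}(P_{i+1})$. So this direction could be repaired by citing that result (or reproducing its proof), but as written both halves of your argument have genuine gaps, and the first half needs to be reorganized along the lines indicated above.
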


\begin{proof}
Let $m=m(G)=\max\{\ell: \ell\,\, \text{is the length of an }\mathcal{F}%
\text{-chain in}\,G \}$ and $f=f(G).$

It is well known that there is an irreducible tower in $G$, say $\hat{P_{i}},
i=1,\ldots,f$, in the sense of \cite{Tur2}. We observe now that the
corresponding sequence $P_{i}, i=1,\ldots,f,$ where $P_{f}=\hat{P_{f}}$ and
$P_{i}=\hat{P_{i}}/C_{\hat{P_{i}}}(P_{i+1})$ for $i=1,\ldots,f-1$, forms an $\mathcal{F}$-chain of length $f$ in $G$ in the sense of Definition 2.4. Thus we have $f\leq
m$.

We shall prove the reversed inequality by induction on $|G|.$ Let $P_{i}=C_{i}%
/D_{i}, i=1,\ldots,m,$ be an $\mathcal{F}$-chain in $G$ of length $m.$ Now
$P_{m}$ is a $p_{m}$-group. Suppose first that there exists a prime $p$
different from $p_{m}$ such that $O_{p}(G)\ne 1$. Set $\bar{G}=G/O_{p}(G).$
Since $\bar{C}_{m}\ne\bar{D}_{m}$, the chain $P_{i}=C_{i}/D_{i},
i=1,\ldots,m,$ is mapped to an $\mathcal{F}$-chain in $\bar{G}$ of length $m$
and so $m(\bar{G})\geq m$. Thus we have
\[
m\leq m(\bar{G})\leq f(\bar{G})\leq f(G)
\]
by induction. Then we may assume that $F(G)=O_{p_{m}}(G).$ Set now $\tilde
{G}=G/F(G).$ Notice that $\tilde{C}_{m-1}\ne\tilde{D}_{m-1}$ and hence the
chain $P_{i}=C_{i}/D_{i}, i=1,\ldots,m-1,$ is mapped to an $\mathcal{F}$-chain
in $\tilde{G}$ of length $m-1.$ It follows then by induction that
\[
m-1\leq m(\tilde{G})\leq f(\tilde{G})=f(G)-1
\]
which completes the proof.
\end{proof}

Finally we want to state a special case of Hartley-Isaacs theorem [\cite{H}, Theorem B].

\begin{lemma} For any arbirary group $F$ there is a number $\delta(F)$ depending only on $F$ with the following property: Let $F$ act coprimely on the solvable group $G,$ and let $k$ be any field of characteristic not dividing $|F|.$ Then for any completely reducible $kGF$-module $V$,  we have $dim_{k}V\leq \delta(F)dim_{k}C_V(F).$

\end{lemma}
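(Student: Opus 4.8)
The statement is the Hartley--Isaacs bound, so the plan is to reconstruct it by induction on $|G|$, preceded by the standard coprime reductions. First I would pass to a finite extension of $k$ splitting $k[GF]$; since $\mathrm{char}\,k\nmid|F|$ the averaging idempotent $e_{F}=\frac{1}{|F|}\sum_{f\in F}f$ is available, so $C_{V}(F)=e_{F}V$ commutes with extension of scalars and both $\dim_{k}V$ and $\dim_{k}C_{V}(F)$ are unchanged, while semisimplicity over $kG$ is preserved. The same idempotent then yields a relative Maschke argument: if $W\le V$ is a $k[GF]$-submodule and $\pi$ is a $kG$-projection onto $W$ furnished by the semisimplicity of $V|_{G}$, then $\frac{1}{|F|}\sum_{f\in F}f\pi f^{-1}$ is a $k[GF]$-projection onto $W$, so $V$ is completely reducible as a $k[GF]$-module. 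Writing $V=\bigoplus_{i}V_{i}$ with each $V_{i}$ irreducible over $k[GF]$ and using $C_{V}(F)=\bigoplus_{i}C_{V_{i}}(F)$, it suffices to bound each $V_{i}$; replacing $G$ by $G/C_{G}(V_{i})$, an $F$-invariant normal subgroup which leaves the fixed-point data untouched, I may assume $V$ is an irreducible, $G$-faithful $k[GF]$-module.

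For the inductive step I would pick a minimal $F$-invariant normal subgroup $N$ of $G$, which by solvability is an elementary abelian $q$-group. Restricting $V$ to $N$ and invoking Clifford theory, $V|_{N}$ is semisimple and $GF$ permutes its homogeneous components, producing a system of imprimitivity. The coprime fixed-point machinery, in particular the identity $C_{G/N}(F)=C_{G}(F)N/N$ and the Glauberman-type existence of an $F$-fixed component, lets me transport the estimate from the component stabilizers and from the quotient $G/N$ back to $G$ at the cost of a factor depending only on $|F|$; the bookkeeping remains uniform precisely because $\delta$ is allowed to depend on $F$ alone. Iterating down an $F$-invariant chief series reduces the problem to the case in which $G$ is a $q$-group acting homogeneously on the irreducible $V$.

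The crux, and the step I expect to be the real obstacle, is this base case: bounding $\dim_{k}C_{V}(F)$ from below when $G$ is a $q$-group and $V$ is irreducible. Here I would evaluate fixed points through the Brauer character, $\dim_{k}C_{V}(F)=\frac{1}{|F|}\sum_{f\in F}\beta_{V}(f)$, which is legitimate because $F$ acts with order coprime to $\mathrm{char}\,k$ and hence semisimply, so that the task becomes to rule out the contributions $\beta_{V}(f)$ with $f\ne1$ conspiring to push the average below $\dim_{k}V/\delta(F)$. Controlling these values by means of the coprime action of $F$ on $G$ and on the symplectic-type space attached to $N$ is exactly the quantitative heart of \cite{H}, and securing a bound that depends only on $|F|$, uniformly in $k$ and $G$ and in particular independent of the complement $H$ in the intended application, is where the weight of the argument lies. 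Once this base estimate is available, multiplying the per-layer factors accumulated along the chief series produces a single function $\delta(F)$ with $\dim_{k}V\le\delta(F)\dim_{k}C_{V}(F)$, as required.
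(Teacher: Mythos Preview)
The paper does not prove this lemma at all: it is stated explicitly as ``a special case of Hartley--Isaacs theorem [\cite{H}, Theorem~B]'' and is used as a black box in the proof of the main theorem. There is therefore no proof in the paper to compare your attempt against.

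Your sketch is a reasonable high-level outline of how the Hartley--Isaacs argument proceeds (scalar extension via the idempotent $e_F$, relative Maschke, reduction to an irreducible faithful module, Clifford-theoretic descent along an $F$-invariant chief series), and you correctly identify the $q$-group base case as the substantive core. However, since you yourself defer that core step back to \cite{H}, your proposal is not a self-contained proof either; it is essentially a roadmap pointing to the same source the paper cites. For the purposes of this paper, simply invoking \cite{H} as the authors do is entirely appropriate.
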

\section{Proof of Theorem}

Let $A=FH$ be a fixed Frobenius group with kernel $F$ and complement $H$ and
$m$ be a fixed positive integer and $\mathcal{G=G}_{A,m}$ be the set of all
finite solvable groups $G$ on which $A$ acts coprimely by automorphisms with
$|C_{G}(F)|\leq m$. Clearly, we have $\mathcal{G}=\bigcup_{n\in\mathbb{N}%
}\mathcal{G}_{n}$ where
\[
\mathcal{G}_{n}=\{G\in\mathcal{G}:f(C_{G}(H))=n\}.
\]
Our theorem needs to prove that the subset $\{|G:F_{n}%
(G)|:G\in\mathcal{G}_{n}\}$ of $%
\mathbb{N}
$ has a maximum for any $n\in\mathbb{N}$. If this is known one
can define a function $h=h_{A,m}:%
\mathbb{N}
\rightarrow%
\mathbb{N}
$ such that
\[
h(n)=\max\{|G:F_{n}(G)|:G\in\mathcal{G}_{n}\}
\]
for any $n.$ Of course this function, if it exists, may depend on the
parameters $m,F$ and $H$. Our proof will be given by a recursive construction
of the function $h$ and will show that $h$ may depend on $F$, but is actually
independent of $H.$ We next define the function $g: \mathbb{N}\rightarrow\mathbb{N}$ by $$g(n)=\max\{h_{A,m}(n):A\text{ is a Frobenius
group with kernel of order} \left\vert F\right\vert \}.$$ Notice that this maximum exists as the set of Frobenius groups with kernel of a given order is finite. It is also clear that $g(n)$ depends only on $\left\vert F\right\vert $ and $m$, and satisfies $|G:F_{n}(G)|\leq g(n)$ for any group $G$ on which $A$ acts coprimely by automorphisms with
$|C_{G}(F)|\leq m$ and $f(C_{G}(H))=n.$

We proceed now to prove by induction on $n$ that the set $\{|G:F_{n}(G)|:G\in\mathcal{G}_{n}\}$ has a maximum $h(n)$. Suppose first that $n=0$ and let
$G\in\mathcal{G}_{0} $. Now $f(C_{G}(H))=0$, that is $C_{G}(H)=1$. This can
happen only when $[G,F]=1$ by Lemma 2.2. Therefore $|G|\leq m$ and so $h(0)$
exists. We assume that for any fixed $n\geq1$ and any $k<n$ we have $\{
|G:F_{k}(G)| : G\in\mathcal{G}_{k}\}$ has a maximum $h(k)$. Set $d _{n}=\max\{g_{k}: k=0,\ldots,n-1\}$.

Let now $G$ be a fixed, but arbitrary element of $\mathcal{G}_{n}$. Notice
that if $|F_{n+1}(G):F_{n}(G)|$ is bounded by a number which depends only on
$m$ then so is $|G:F_{n+1}(G)|$. Therefore there is no loss in assuming that
$G=F_{n+1}(G)$. As $|G/[G,F]|\leq m,$ without loss of generality we may assume
that $[G,F]=G$. Since $F(G)/\Phi(G)=F(G/\Phi(G))$, we may also assume that
$\Phi(G)=\Phi(F(G))=1$. It is well known that $F(GA)/\Phi(GA)$ is a direct sum
of irreducible $GA$-submodules over possibly different prime fields . Since
$F(G)\Phi(GA)/\Phi(GA)$ is a $GA$-submodule of $F(GA)/\Phi(GA)$ and is
isomorphic to $F(G)/\Phi(G)$ as a $GA$-group we see that $F(G)$ is a direct
sum of irreducible $GA$-modules $M_{1},\ldots,M_{k}$. On the other hand, by
Clifford's theorem each $M_{i}$ is a direct sum of irreducible $G$-modules
$M_{ij}, j=1,\ldots,s_{i}$. By A.13.8(b) in \cite{Do}, we see that
$F(G)=\bigcap_{i=1}^{k}\bigcap_{j=1}^{s_{i}}C_{G}(M_{ij})=\bigcap_{i=1}%
^{k}C_{G}(M_{i})$.

Without loss of generality we may suppose that $C_{M_{i}}(F)\ne0$ for $i=1,\ldots,s$ and $C_{M_{i}}(F)=0 $ for
$i=s+1,\ldots,k.$ Set $\bar{G}=G/F(G)$ and $\bar{X}%
=\bigcap_{i=1}^{s}C_{\bar{G}}(M_{i})$ and $\bar{Y}=\bigcap_{i=s+1}^{k}%
C_{\bar{G}}(M_{i}).$ Then $\bar{X}$ and $\bar{Y}$ are subgroups of $\bar{G}$
with $\bar{X}\cap\bar{Y}=1.$ Clearly, by Lemma 2.1
\[
|G:F_{n}(G)|=|\bar{G}/F_{n-1}(\bar{G})|\leq|\bar{G}/\bar{X}/F_{n-1}(\bar
{G}/\bar{X})||\bar{G}/\bar{Y}/F_{n-1}(\bar{G}/\bar{Y})|.
\]

On the other hand
\[
\prod_{i=1}^{s}|C_{M_{i}}(F)|\leq|C_{G}(F)|=m
\]
and so
\[
2^{s}\leq\prod_{i=1}^{s} |C_{M_{i}}(F)|\leq m.
\]
This gives $s\leq\log_{2}m$ as $|C_{M_{i}}(F)|\geq2$ for any $M_{i}$ where
$i=1,\ldots s$. Therefore we have%

\[
|\bar{G}/\bar{X}|\leq\prod_{i=1}^{s}|G/C_{G}(M_{i})| \leq\prod_{i=1}^{s}|Aut
M_{i}| \leq\prod_{i=1}^{s}|M_{i}|!.
\]

Pick $M_{i}$ for $i\in\{1,\ldots,s\}$ and consider its decomposition into
$GF$-homogeneous components. Note that $H$ acts transitively on the set of
$GF$-homogeneous components, and hence $F$ fixes a point in each component.
Then by Lemma 2.6, there is a number $\delta(F)$ depending only on $F$ such that
\[
|M_{i}|\leq|C_{M_{i}}(F)|^{\delta(F)}\leq m^{\delta(F)}.
\]
Thus we have%

\[
|\bar{G}/\bar{X}|\leq({m^{\delta(F)}!})^{\log_{2}m}.
\]

Note that this bound depends on $F$ and $m$, but is completely independent of $H.$ Clearly if $s=k$ then $\bar{Y}=\bar{G}$ and the result follows from the
above equality. Therefore we may assume that $s\ne k$. Set $G_1=G/\bigcap_{i=s+1}^{k}C_{G}(M_{i})$. As $C_{\bar{G}}(M_{i})=C_{G}(M_{i})/F(G)$
we have $G_1\cong \bar{G}/\bar{Y}$ and so
it suffices to bound $|G_{1}:F_{n-1}(G_{1})|$ suitably. If $f(C_{G_{1}%
}(H))\leq n-1$ then it would follow by induction assumption that $|G_{1}:F_{n-1}%
(G_{1})|\leq d _{n} $. Thus we have $f(C_{G_{1}}(H))=n=f(G_{1})$ as we already
have $f(G_{1})\leq n.$

Now by Lemma 2.5, there exists a chain of sections $P_{i}=C_{i}/D_{i},
i=1,\ldots,n$, of $C_{G_{1}}(H)$ satisfying the conditions $(a)-(d)$
of Definition 2.4. Clearly, $P_{i}=C_{i}/D_{i},\, i=1,\ldots,n$, is also a chain
of sections of $G_{1}$. Furthermore, we have
$P_{n}=C_{n}\leq F(G_{1})$ because otherwise the chain $P_{i}, i=1,\ldots,n$, is mapped to a chain of the same length in $G_1/F(G_1),$ which is impossible. As a $p_{n}$-subgroup, $C_{n}$ is contained in the Sylow $p_{n}$-subgroup, say $Q$, of $F(G_{1})$.
Clearly $Q$ is $A$-invariant. Notice that there exists $M=M_{i} $ for some
$i\in\{s+1,\ldots,k\}$ on which $C_{n}$ acts nontrivially because otherwise
$C_{n}$ is the trivial subgroup of $G_{1}$. We consider now the action of $A$
on the group $MQ$ and apply Lemma 2.3. It follows that
\[
Ker(C_{Q}(H)\,\, on\,\, C_{M}(H))=Ker(C_{Q}(H)\,\, on\,\, M).
\]
Due to coprimeness, $[C_{M}(H),P_{n},P_{n}]=[C_{M}(H),P_{n}]$. We set now
$P_{n+1}=[C_{M}(H),P_{n}]$. Clearly the sequence $P_{1},\ldots, P_{n},
P_{n+1}$ forms a chain of sections of $C_{G}(H)$ satisfying the conditions
$(a)-(d)$ of Definition 2.4 as $C_{G_1}(H)$ is the image of $C_G(H)$ in $G_1$ by the coprimeness condition $(|G|,|H|)=1$. This forces by Lemma 2.5 that
$f(C_{G}(H))=n+1$, which is a contradiction completing the proof. $\Box$

\end{document}